\newtheorem{theorem}{Theorem}[section]
\newtheorem*{theorem*}{Theorem}
\newtheorem{corollary}[theorem]{Corollary}
\newtheorem{lemma}[theorem]{Lemma}
\newtheorem{rem}[theorem]{Remark}
\newtheorem{proposition}[theorem]{Proposition}
\newtheorem{fact}[theorem]{Fact}
\newtheorem*{fact*}{Fact}
\theoremstyle{definition}
\newtheorem{definition}[theorem]{Definition}
\newcommand{\ee}{\varepsilon}
\newcommand{\nn}{\mathbb{N}}
\begin{document}

\title[A short calculation]{A short calculation of the Szlenk index of spaces of continuous functions}
\author{R.M. Causey}

\begin{abstract} We provide a short calculation of the Szlenk index of $C(K)$ spaces using the Grasberg norm.
\end{abstract}

\maketitle

\section{The ordinal connection}

Throughout, $\textbf{Ord}$ denotes the class of ordinal numbers. We recall the convention that for $\xi\in \textbf{Ord}$, $\xi=\{\zeta\in \textbf{Ord}:\zeta<\xi\}$. We also recall the convention that $\xi<\textbf{Ord}$ for all $\xi\in \textbf{Ord}$, so that $<$ and $\in$ can be used interchangeably. We let $\xi+=\xi+1=[0,\xi]$.   

The Szlenk index is a fundamental object in Banach space theory. One example of the utility of the Szlenk index is its use in one direction of the Bessaga Pe\l cz\'{n}ski classification of $C(K)$ spaces, $K$ countable, compact, Hausdorff.  Samuel \cite{S} calculated the Szlenk index of $C(\xi+)$ for countable $\xi$.  In particular, he proved that the Szlenk index of $C(\zeta+)=\omega^{\xi+1}$ for countable $\zeta\in [\omega^{\omega^\xi},\omega^{\omega^{\xi+1}})$. Samuel's proof used the opposite direction of the Bessaga-Pe\l cz\'{n}ski result. Brooker \cite{B} extended this result to arbitrary ordinals.  In the absence of the Bessaga-Pe\l cz\'{n}ski result for uncountable ordinals, for which the analogous statement is known to be false, Brooker's result depended on representations of such spaces as $c_0$ direct sums in terms of lower ordinal spaces, combined with his previous,  technical results concerning the Szlenk indices of infinite $c_0$ direct sums.  Both of these results depended on the ordinal interval structure of the space $K$. Since the Szlenk index is a quantification of the Asplundness of a Banach space, $C(K)$ is Asplund if and only if $K$ is scattered, and the scatteredness of $K$ is measured by the Cantor-Bendixson index $CB(K)$ of $K$, the natural relationship should be between the Cantor-Bendixson index of $K$ and the Szlenk index of $C(K)$. The goal of this work is to give a non-technical, short calculation of the Szlenk index of arbitrary $C(K)$ spaces which directly uses the relationship between the Cantor-Bendixson index of $K$ by using the Grasberg norm. The Grasberg norm has already been used in a similar way to prove a number of results on  $C(K)$ spaces, namely their subprojectivity \cite{C3} and the Szlenk indices of their projective tensor products \cite{CGS1},\cite{CGS2}. 

\begin{theorem} Let $K$ be a compact, Hausdorff topolgical space.   Then $C(K)$ is Asplund if and only if $K$ is scattered. In this case, if $\xi$ is the minimum ordinal such that $CB(K)\leqslant \omega^\xi$, then $Sz(C(K))=\omega^\xi$. 
\label{main}
\end{theorem}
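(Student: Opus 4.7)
The plan is to handle the qualitative Asplund dichotomy and the quantitative Szlenk bound together using the Grasberg norm, after first disposing of the non-scattered case. If $K$ is not scattered, a standard argument shows $K$ admits a continuous surjection onto the Cantor set $2^{\nn}$, so $C(2^{\nn})$ embeds isomorphically in $C(K)$, forcing $\ell_1$ to embed into $C(K)^*$. Hence $C(K)$ is not Asplund and $Sz(C(K))$ is undefined. In what follows I assume $K$ is scattered, let $\rho:K\to\ord$ denote the Cantor-Bendixson rank function, and let $\xi$ be minimal with $CB(K)\leqslant \omega^\xi$.

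I would then equip $C(K)$ with the Grasberg norm $|||\cdot|||$, an equivalent norm whose weights are calibrated by $\rho$ so that the dual ball admits a decomposition reflecting the CB filtration of $K$. The key technical ingredient is a lemma asserting that, for appropriately small $\ee>0$, one $\ee$-Szlenk derivation of $B_{(C(K),|||\cdot|||)^*}$ moves the effective support of surviving functionals from one level of the CB filtration down to the next, at a controlled and predictable cost in weight. Granted this, I would show by transfinite induction on $\xi$ that $CB(K)\leqslant \omega^\xi$ implies $Sz(C(K))\leqslant \omega^\xi$. The base case $\xi=0$ corresponds to $CB(K)$ finite and is handled directly. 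For the successor step $\xi\to\xi+1$: since $K^{(\omega^\xi)}$ has strictly smaller CB rank, the inductive hypothesis applied to the restricted problem, combined with the central lemma iterated $\omega$ times, yields $\omega^\xi\cdot\omega=\omega^{\xi+1}$ derivations that exhaust the dual ball. For limit $\xi$: the Grasberg weights are tuned so that the $\ee$-scales and inductive estimates at the levels $\zeta<\xi$ can be assembled coherently, yielding the bound $\sup_{\zeta<\xi}\omega^\zeta=\omega^\xi$.

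The matching lower bound $Sz(C(K))\geqslant \omega^\xi$ comes from an explicit construction: for each $\zeta<\xi$, the minimality of $\xi$ forces $CB(K)>\omega^\zeta$, so there exist points of CB rank $\geqslant\omega^\zeta$, and their Dirac measures together with suitable weak$^*$-cluster iterates are seen to survive $\omega^\zeta$ Szlenk derivations at a uniform $\ee>0$. The main obstacle lies in the upper-bound induction, specifically in the exact calibration of Grasberg weights against the Szlenk parameter so that each $\ee$-derivation advances the CB filtration by exactly one level, neither faster nor slower. The limit-ordinal step is the most delicate, because it requires the weights to decay sharply enough in $\rho$ to prevent mass concentrated at arbitrarily high CB levels from spoiling the $\ee$-thresholds used to stitch the countably many inductive estimates together; this is precisely the feature for which the Grasberg norm was introduced, so once it is in place the transfinite bookkeeping is expected to unwind cleanly.
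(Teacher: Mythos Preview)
Your high-level scaffold (Grasberg norm, transfinite induction on $\xi$ for the upper bound, Dirac measures for the lower bound) matches the paper's, but the mechanism you sketch for the upper bound misfires in two places.

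First, the limit-ordinal step is trivial, not the ``most delicate''. Since $K$ is compact Hausdorff, $CB(K)$ is never a limit ordinal (else $(K\setminus K^\zeta)_{\zeta<CB(K)}$ is an open cover with no finite subcover). Thus $CB(K)\leqslant\omega^\xi$ with $\xi$ a limit forces $CB(K)\leqslant\omega^\zeta$ for some $\zeta<\xi$, and the inductive hypothesis applies directly; no weight calibration across levels is needed. All the content lives in the successor step. Second, and more seriously, your ``central lemma''---that one $\ee$-Szlenk derivation of the dual ball advances the CB level by one---cannot hold as stated: it would give $Sz(C(K))\leqslant CB(K)$, hence equality via the lower bound, contradicting the fact that $Sz(X)$ is always of the form $\omega^\eta$ while $CB(K)$ is always a successor. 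The paper never tracks dual-side Szlenk derivations. It instead invokes the tree characterization (Theorem~\ref{upper}, from \cite{C0}): $Sz(X)\leqslant\xi$ if and only if every weakly null tree $(x_t)_{t\in T}\subset B_X$ of rank $\xi$ admits convex combinations along branches of arbitrarily small norm. The Grasberg norm then enters on the \emph{primal} side: for a partial running combination $g$, the set $\Phi(g,\ee)\subset K$ on which $g$ is large relative to $|g|$ satisfies $CB(\Phi(g,\ee))\leqslant\omega^\xi$ (Lemma~\ref{king}); the inductive hypothesis applied to $C(\Phi(g,\ee))$ yields a further block that is small on $\Phi(g,\ee)$; and Lemma~\ref{queen} controls the Grasberg norm of the updated average. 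Finitely many such steps along a branch push the norm below any prescribed $\delta$. Your non-scattered argument via a Cantor-set quotient is correct but unnecessary here: the paper simply reads off $Sz(C(K))\geqslant CB(K)=\ord$ from Proposition~\ref{lower}.
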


We observe the following consequence, which recovers Brooker's result. 

\begin{corollary} Let $\zeta$ be a infinite ordinal and let $\xi$ be such that $\omega^{\omega^\xi}\leqslant \zeta < \omega^{\omega^{\xi+1}}$.  Then $Sz(C(\zeta+))=\omega^{\xi+1}$. 

\end{corollary}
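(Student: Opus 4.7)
The plan is to reduce the corollary to Theorem \ref{main} by computing the Cantor-Bendixson index of the compact ordinal interval $\zeta+ = [0,\zeta]$ and then verifying that the minimum $\eta$ with $CB(\zeta+)\leqslant \omega^\eta$ is exactly $\xi+1$.

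First, I would recall that $[0,\zeta]$ is a compact, scattered, Hausdorff space, so by the first conclusion of Theorem \ref{main}, $C(\zeta+)$ is Asplund. The heart of the argument is then the Cantor-Bendixson computation. Writing $\zeta$ in Cantor normal form as $\zeta = \omega^{\alpha_1}+\omega^{\alpha_2}+\cdots+\omega^{\alpha_n}$ with $\alpha_1\geqslant \alpha_2\geqslant \cdots\geqslant \alpha_n$, a routine induction on $\alpha_1$ (using that successive derivatives of $[0,\omega^{\alpha_1}]$ strip away one layer of exponent at a time, and that the contribution of the lower order terms $\omega^{\alpha_2}+\cdots+\omega^{\alpha_n}$ is absorbed by the top term) yields $CB(\zeta+) = \alpha_1+1$.

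Next I would translate the hypothesis $\omega^{\omega^\xi}\leqslant \zeta < \omega^{\omega^{\xi+1}}$ into a statement about $\alpha_1$: these inequalities force $\omega^\xi \leqslant \alpha_1 < \omega^{\xi+1}$. Consequently $\alpha_1+1 > \omega^\xi$ (since $\omega^\xi$ is either a limit ordinal, when $\xi\geqslant 1$, or $1$, when $\xi=0$, and in both cases $\alpha_1+1$ strictly exceeds $\omega^\xi$), while $\alpha_1+1\leqslant \omega^{\xi+1}$. Thus $\omega^\xi < CB(\zeta+) \leqslant \omega^{\xi+1}$, and $\xi+1$ is the minimum ordinal $\eta$ with $CB(\zeta+)\leqslant \omega^\eta$.

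Applying Theorem \ref{main} then immediately gives $Sz(C(\zeta+))=\omega^{\xi+1}$, completing the proof. The only step that requires any care is the CB computation in the second paragraph; the bookkeeping is entirely elementary but one must handle the two boundary cases ($\xi=0$ versus $\xi\geqslant 1$) when arguing that $\alpha_1+1>\omega^\xi$, since for general ordinals the inequality $\alpha<\beta$ does not always upgrade to $\alpha+1<\beta$.
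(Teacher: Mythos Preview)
Your proof is correct and follows essentially the same strategy as the paper: reduce to Theorem~\ref{main} by establishing $\omega^\xi < CB(\zeta+) \leqslant \omega^{\xi+1}$. The only differences are cosmetic---you compute $CB(\zeta+)=\alpha_1+1$ exactly from the Cantor normal form of $\zeta$, whereas the paper sandwiches $\zeta+$ between $\omega^{\omega^\xi}+$ and $\omega^{\omega^\xi m}+$ and computes $CB$ only for those endpoints---and your closing caution about splitting into the cases $\xi=0$ and $\xi\geqslant 1$ is unnecessary, since $\alpha_1\geqslant\omega^\xi$ already gives $\alpha_1+1>\omega^\xi$ directly.
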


\begin{proof} By Theorem \ref{main}, it suffices to show that $\omega^\xi<CB(\zeta+)\leqslant \omega^{\xi+1}$.

We first observe that for any ordinals $\beta\leqslant \alpha$, 

\[[1,\omega^\alpha]^\beta = \{\omega^\alpha\}\cup \{\omega^{\ee_1}+\ldots + \omega^{\ee_n}:n\in\nn, \alpha > \ee_1\geqslant \ldots \geqslant \ee_n\geqslant\beta\},\] which can be easily shown by induction. Therefore $CB(\omega^{\omega^\alpha}+)=\omega^\alpha+1$.     From this it follows that $\omega^{\omega^\xi}\in \zeta+^{\omega^\xi}$, and $CB(\zeta+)>\omega^\xi$.   On the other hand, there exists $m\in\nn$ such that $\zeta<\omega^{\omega^\xi m}$, so  \[CB(\zeta+)\leqslant CB(\omega^{\omega^\xi m}+)=\omega^\xi m<\omega^{\xi+1}.\]
    
\end{proof}

\subsection{The Szlenk index}

For a Banach space $X$, $\ee>0$, and a weak$^*$-compact subset $K$ of $X^*$, we define $s_\ee(K)$ to be the set of all $x^*\in K$ such that for all weak$^*$-neighborhoods $V$ of $x^*$, $\text{diam}(K\cap V)>\ee$.  We define \[s^0_\ee(K)=K,\] \[s^{\xi+1}_\ee(K)=s_\ee(s^\xi_\ee(K)),\] and if $\xi$ is a limit ordinal, \[s^\xi_\ee(K)=\bigcap_{\zeta<\xi}s_\ee^\zeta(K).\] Note that each $s_\ee^\xi(K)$ is again weak$^*$-compact.   

We define \[Sz(K,\ee)=\{\xi\in \textbf{Ord}:s^\xi_\ee(K)\neq \varnothing\}\] and \[Sz(K)=\bigcup_{\ee>0}Sz(K,\ee).\] This is equivalent to defining $Sz(K,\ee)=\textbf{Ord}$ if $s^\xi_\ee(K)\neq \varnothing$ for all $\xi\in \textbf{Ord}$, and otherwise $Sz(K,\ee)=\min \{\xi\in \textbf{Ord}:s^\xi_\ee(K)=\varnothing\}$.   Similarly, if $Sz(K,\ee)\in \textbf{Ord}$ for all $\ee>0$, then $Sz(K)=\sup_{\ee>0}Sz(K,\ee)$.   

It is quite clear that $Sz(K)\in \textbf{Ord}$ if and only if for every $\ee>0$ and every non-empty subset $L$ of $K$, there exists a weak$^*$-open set $V$ such that $L\cap V\neq \varnothing$ and $\text{diam}(L\cap V)\leqslant\ee$.  In this case, $L$ is said to be \emph{weak}$^*$-\emph{fragmentable}.   We say $X$ is \emph{Asplund} if $B_{X^*}$ is weak$^*$-fragmentable, which is equivalent to the condition that $Sz(X):=Sz(B_{X^*})\in \textbf{Ord}$.  This definition is not the original definition of an Asplund space, but it is known to be equivalent \cite{Ba}. 

We conclude this section with a result about the Szlenk index from \cite{L}.

\begin{proposition} For an Asplund Banach space $X$, there exists an ordinal $\xi$ such that $Sz(X)=\omega^\xi$. 
\label{form}
\end{proposition}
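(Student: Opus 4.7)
I would follow the strategy of Lancien \cite{L}, building on a submultiplicative inequality for the Szlenk derivations on $B_{X^*}$ that reflects the absolute convexity of the dual ball. Writing $\varphi(\ee):=Sz(X,\ee)$, the Asplund hypothesis gives $\varphi(\ee)\in\ord$ for every $\ee>0$, and $\varphi$ is non-increasing in $\ee$ with $Sz(X)=\sup_{\ee>0}\varphi(\ee)$.

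First I would record two elementary facts about the derivations $s^\xi_\ee$ applied to the ball: monotonicity in the set argument ($L\subseteq K$ implies $s^\xi_\ee(L)\subseteq s^\xi_\ee(K)$) and the scaling identity $s^\xi_\ee(\lambda B_{X^*})=\lambda\, s^\xi_{\ee/\lambda}(B_{X^*})$ for $\lambda>0$, both proved by straightforward transfinite induction. The genuinely nontrivial ingredient is a convex-averaging inclusion of the shape
\[
\tfrac{1}{2}\bigl(s^\xi_\ee(B_{X^*})+s^\xi_\delta(B_{X^*})\bigr)\subseteq s^\xi_{(\ee+\delta)/2}(B_{X^*}),
\]
proved by transfinite induction on $\xi$. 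The successor step uses that a weak$^*$-open neighborhood of a midpoint $(x^*+y^*)/2$ contains a Minkowski half-sum of weak$^*$-open neighborhoods of $x^*$ and $y^*$, and that diameters of such half-sums in a convex target are bounded below by the average of the individual diameters; the limit step is routine.

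From this inclusion I would derive a doubling lemma: if $s^\alpha_\ee(B_{X^*})\neq\varnothing$, then $s^{\alpha\cdot 2}_{\ee/2}(B_{X^*})\neq\varnothing$, and by iteration $s^{\alpha\cdot n}_{\ee_n}(B_{X^*})\neq\varnothing$ for all $n\in\nn$ with suitable $\ee_n>0$. Consequently, whenever $Sz(X,\ee)>\alpha$ for some $\ee$, one has $Sz(X)\geqslant \alpha\cdot\omega$.

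The conclusion is then pure ordinal bookkeeping. Let $\xi$ be the least ordinal with $Sz(X)\leqslant\omega^\xi$. If $\xi=0$ then $Sz(X)=1=\omega^0$. If $\xi$ is a limit, minimality gives $Sz(X)>\omega^\zeta$ for each $\zeta<\xi$, hence $Sz(X)\geqslant\sup_{\zeta<\xi}\omega^\zeta=\omega^\xi$. If $\xi=\eta+1$, minimality forces $Sz(X)>\omega^\eta$, so by the doubling lemma $Sz(X)\geqslant\omega^\eta\cdot n$ for every $n\in\nn$, yielding $Sz(X)\geqslant\omega^{\eta+1}=\omega^\xi$. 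The main obstacle is the convex-averaging inclusion, as it is the one place where the linear structure of the ball must interact compatibly with the weak$^*$-topology through the transfinite derivation; everything else is formal.
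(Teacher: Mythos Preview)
The paper does not give its own proof of this proposition; it simply quotes the result from Lancien \cite{L}. Your proposal therefore attempts more than the paper does, and the overall architecture you describe---a convexity-based inclusion for the Szlenk derivations, a doubling consequence, and the closing ordinal bookkeeping---is indeed Lancien's route. The bookkeeping paragraph is fine.

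There is, however, a genuine gap at the key step. The inclusion you write,
\[
\tfrac{1}{2}\bigl(s^\xi_\ee(B_{X^*})+s^\xi_\delta(B_{X^*})\bigr)\subseteq s^\xi_{(\ee+\delta)/2}(B_{X^*}),
\]
keeps the ordinal fixed and averages the thresholds. First, the justification you offer for the successor step is false: the diameter of a Minkowski half-sum is \emph{not} in general bounded below by the average of the diameters. For instance, with $A=\{0,e_1\}$ and $B=\{0,e_2\}$ in $\ell_2$ one has $\text{diam}(A)=\text{diam}(B)=1$ but $\text{diam}\bigl(\tfrac{1}{2}(A+B)\bigr)=\tfrac{\sqrt{2}}{2}<1$. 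Second, and more structurally, even granting the inclusion it cannot produce your doubling lemma: the ordinal on the right is still $\xi$, so nothing forces $s^{\alpha\cdot 2}_{\ee'}(B_{X^*})\neq\varnothing$ from $s^\alpha_\ee(B_{X^*})\neq\varnothing$.

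The inclusion that actually drives Lancien's argument adds the ordinals and keeps the threshold fixed on the summands:
\[
\tfrac{1}{2}\bigl(s^\alpha_\ee(B_{X^*})+s^\beta_\ee(B_{X^*})\bigr)\subseteq s^{\alpha+\beta}_{\ee/2}(B_{X^*}),
\]
proved by induction on $\beta$ (the base case $\beta=0$ uses only convexity of $B_{X^*}$; the successor step produces the required diameter from a \emph{single} factor, avoiding the averaging fallacy). Taking $\alpha=\beta$ gives $s^\alpha_\ee(B_{X^*})\neq\varnothing\Rightarrow s^{\alpha\cdot 2}_{\ee/2}(B_{X^*})\neq\varnothing$, and iteration plus your final paragraph finishes the proof. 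So your plan is salvageable, but the displayed inclusion and its supporting diameter claim must be replaced.
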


\subsection{Trees, rank, and weak nullity}

For us, a \emph{tree} is a partially ordered set $(T, \preceq)$ such that for each $t\in T$, $\{s\in T:s\preceq t\}$ is finite and linearly ordered.  We will also assume that our trees never contain the node $\varnothing$, so that we can append the node to obtain $\{\varnothing\}\cup T$, and we extend the order $\preceq$ to $\{\varnothing\}\cup T$ by declaring that $\varnothing\prec t$ for all $t\in T$.   Thus we have appended a root to $T$.    

We let $MAX(T)$ denote the $\preceq$-maximal members of $T$.   We define $T'=T\setminus MAX(T)$. Note that $T'$ is also a tree. We also define \[T^0=T,\] \[T^{\xi+1}=(T^\xi)',\] and if $\xi$ is a limit ordinal, \[T^\xi=\bigcap_{\zeta<\xi}T^\zeta.\] We  define \[\text{rank}(T)=\{\xi\in \textbf{Ord}:T^\xi\neq \varnothing\}.\] If there exists $\xi$ such that $T^\xi=\varnothing$ (equivalently, if $\text{rank}(T)\in \textbf{Ord}$), then we say $T$ is \emph{well-founded}, and otherwise we say $T$ is \emph{ill-founded}.   In that case, $\text{rank}(T)=\textbf{Ord}$.   If $T$ is well-founded, then $\text{rank}(T)=\min\{\xi\in \textbf{Ord}:T^\xi=\varnothing\}$. 

For a tree $T$, a Banach space $X$, and a collection $(x_t)_{t\in T}\subset X$, we say the collection $(x_t)_{t\in T}$ is \emph{weakly null} if for each ordinal $\xi$ and each $t\in (\{\varnothing\}\cup T)^{\xi+1}$, \[0\in \overline{\{x_s:s\in T^\xi,s^-=t\}}^\text{weak}.\] Here, $s^-=t$ denotes the relation that $t$ is the immediate predecessor of $s$. That is, $t$ is the (necessarily unique) maximum of $\{r\in T^\xi:r\prec s\}$.   

We next isolate some easy facts about trees.

\begin{fact}  Let $T$ be a well-founded tree and let $\xi$ be an ordinal such that $\text{\emph{rank}}(T)\geqslant \xi$.  Let $X$ be a Banach space and let $(x_t)_{t\in T}\subset X$ be weakly null. 
\begin{enumerate}[(i)]\item $\text{\emph{rank}}(T\setminus T^\xi)=\xi$ and $(x_t)_{t\in T\setminus T^\xi}$ is weakly null. 

\item For $s\in MAX(T^\xi)$, if $S=\{t\in T:s\prec t\}$, $\text{\emph{rank}}(S)=\xi$ and $(x_s)_{t\in S}$ is weakly null. 

\end{enumerate}
\end{fact}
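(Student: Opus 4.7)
The plan is to leverage a single structural observation, proved by an easy transfinite induction on $\xi$: $T^\xi$ is closed under taking ancestors, i.e., if $u \in T^\xi$ and $v \prec u$, then $v \in T^\xi$. (At a successor stage, if $u \in T^{\eta+1}$ and $v \prec u$ then $v \in T^\eta$ by the inductive hypothesis, and $u$ witnesses $v \notin MAX(T^\eta)$; limit stages are immediate.) Both the rank identity and the weak nullity assertions in (i) and (ii) reduce to identifying the iterated derivations of the relevant subforests via this single fact.

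For (i), a transfinite induction on $\zeta \leq \xi$ yields $(T \setminus T^\xi)^\zeta = T^\zeta \setminus T^\xi$. The successor step rests on the identity $MAX(T^\zeta \setminus T^\xi) = MAX(T^\zeta) \setminus T^\xi$: any $T^\zeta$-successor of $t \in T^\zeta \setminus T^\xi$ that happened to lie in $T^\xi$ would drag $t$ into $T^\xi$ by ancestor-closure, so every $T^\zeta$-successor of $t$ in fact survives in $T^\zeta \setminus T^\xi$. This gives $\text{rank}(T \setminus T^\xi) = \xi$ immediately. For weak nullity at level $\zeta$: when $t' \in T^{\zeta+1} \setminus T^\xi$, ancestor-closure guarantees the immediate successors of $t'$ coincide in $T^\zeta$ and in $T^\zeta \setminus T^\xi$, and the condition is inherited verbatim from that on $T$. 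The case $t' = \varnothing$ is the only nontrivial one: one must show $0$ lies in the weak closure of the roots of $T^\zeta \setminus T^\xi$. Here the key move is to pick any $\tilde t \in MAX(T^\xi)$ (which exists whenever $T^\xi \neq \varnothing$; if $T^\xi = \varnothing$ the whole claim is trivial), observe that $\tilde t \in T^\xi \subseteq T^{\zeta+1}$, and apply weak nullity of $(x_t)_{t \in T}$ at $\tilde t$: maximality of $\tilde t$ in $T^\xi$ forces its immediate $T^\zeta$-successors outside $T^\xi$, so they are genuine roots of $T^\zeta \setminus T^\xi$ and furnish the required weak accumulation.

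Part (ii) follows the same template with $S = \{t \in T : s \prec t\}$. A transfinite induction shows $S^\zeta = T^\zeta \cap S$, using the analogous identity $MAX(T^\zeta \cap S) = MAX(T^\zeta) \cap S$ (any $T^\zeta$-successor of an element $\succ s$ is automatically $\succ s$). Since $s \in MAX(T^\xi)$ forces $T^\xi \cap S = \varnothing$ but $S^\zeta \neq \varnothing$ for $\zeta < \xi$ (the root rank of $S$ is $\xi$ because $r(s) = \xi$ equals the sup of $r(t)+1$ over immediate successors of $s$), we obtain $\text{rank}(S) = \xi$. For weak nullity, the non-root case pulls back to $T$ directly, while the roots of $S^\zeta$ are exactly the immediate $T^\zeta$-successors of $s$; since $s \in T^\xi \subseteq T^{\zeta+1}$, the desired weak accumulation is handed to us by weak nullity of $(x_t)_{t \in T}$ at $t = s$.

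The only mildly subtle step in the whole argument is identifying the correct node of $T$ to invoke when checking the root case $t' = \varnothing$ of weak nullity: a maximal element of $T^\xi$ in part (i), and the given $s$ itself in part (ii). Once this is done, the rest is a routine unwinding of definitions together with the ancestor-closure of $T^\xi$.
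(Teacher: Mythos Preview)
Your proof is correct and follows essentially the same route as the paper's: both establish the identities $(T\setminus T^\xi)^\zeta = T^\zeta \setminus T^\xi$ and $S^\zeta = T^\zeta \cap S$ by transfinite induction and then read off the rank and weak-nullity claims directly. Your explicit isolation of the ancestor-closure of $T^\xi$, and your handling of the root case $t'=\varnothing$ in (i) by invoking weak nullity at a chosen $\tilde t\in MAX(T^\xi)$, are if anything more careful than the paper's rather informal treatment of that case.
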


\begin{proof}

$(i)$ An easy proof by induction yields that $(T\setminus T^\xi)^\mu=T^\mu\setminus T^\xi$ for any $\mu\leqslant \xi$. Then for $\mu\leqslant \xi$, $(T\setminus T^\xi)^\mu=T^\mu\setminus T^\xi=\varnothing $ if and only if $\mu=\xi$. This yields that $\text{rank}(T\setminus T^\xi)=\xi$.  

It is quite clear that for any $\zeta$ and $t\in (\{\varnothing\}\cup (T\setminus T^\xi))^{\zeta+1}$, \[\{u\in (T\setminus T^\xi)^\zeta: u^-=t\} = \left\{\begin{array}{ll}  \bigcup\{u\in (T\setminus T^\xi)^\zeta:u^-\in MAX(T^\xi)\} & : t = \varnothing \\  \{u\in (T\setminus T^\xi)^\zeta:u^-=t\} & : t \neq \varnothing. \end{array}\right. \] Therefore \[0\in \overline{\{x_u:u\in (T\setminus T^\xi)^\zeta, u^-=t\}}^\text{weak}.\]

$(ii)$    We first claim that for $\mu\leqslant \xi\leqslant \text{rank}(T)$ and $s\in (\{\varnothing\}\cup T)^\xi$, \[\{t\in T:s\prec t\}^\mu = \{t\in T^\mu: s\prec t\}.\]  We prove this by induction on $\mu$ for $\xi$ held fixed. The case $\mu=0$ is clear, and the case in which $\mu$ is a limit ordinal easily follows by taking intersections. Assume the result holds for $\mu$. Fix  \[u\in \{t\in T:s\prec t\}^{\mu+1}=(\{t\in T:s\prec t\}^\mu)'= \{t\in T^\mu:s\prec t\}'.\]  Then there exists $v\in \{t\in T^\mu:s\prec t\}$ such that $u\prec v$, so $s\prec u\prec v\in T^\mu$, and $u\in \{t\in T^{\mu+1}:s\prec t\}$.  On the other hand, assume $u\in \{t\in T^{\mu+1}:s\prec t\}$.  Since $u\in T^{\mu+1}$, there exists $v\in T^\mu$ such that $u\prec v$. Necessarily $s\prec u\prec v$, so $u\prec v\in \{t\in T^\mu:s\prec t\}$, and \[u\in (\{t\in T^\mu:s\prec t\})'= \{t\in T:s\prec t\}^{\mu+1}.\] Since $s\in MAX(T^\xi)$ and $\mu\leqslant \xi$, $\{t\in T^\mu:s\prec t\}$ is empty if and only if $\mu=\xi$. By the preceding paragraph, $\{t\in T^\mu:s\prec t\}=S^\mu$, so $\text{rank}(S)=\xi$. 

It is quite clear that for any $\zeta$ and $t\in (\{\varnothing\}\cup S)^{\zeta+1}$, \[\{u\in S^\zeta: u^-=t\} = \left\{\begin{array}{ll} \{u\in S^\zeta:u^-=s\} & : t = \varnothing \\ \{u\in S^\zeta:u^-=t\} & : t \neq \varnothing. \end{array}\right. \] Therefore \[0\in \overline{\{x_u:u\in S^\zeta, u^-=t\}}^\text{weak}.\]

\end{proof}

We conclude this section with the following fact concerning the Szlenk index from \cite{C0}. 

\begin{theorem} For a Banach space $X$ and an ordinal $\xi$, $Sz(X)\leqslant\xi $ if and only if for any tree $T$ with $\text{\emph{rank}}(T)=\xi$ and any weakly null collection $(x_t)_{t\in T}\subset B_X$, \[0=\inf\{\|x\|:t\in T, x\in \text{\emph{co}}\{x_s:s\preceq t\}\}.\]
\label{upper}
\end{theorem}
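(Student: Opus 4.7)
I plan to prove both implications by contrapositive, via transfinite inductions that match the tree derivation $T^\mu$ to the Szlenk derivation $s^\mu_\delta(B_{X^*})$. The bridge is Hahn--Banach: a convex combination along a chain has norm at least $\delta$ precisely when some norm-one functional separates it from $0$ by at least $\delta$.

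For the ``only if'' direction, I would assume the tree condition fails and fix $T$ of rank $\xi$, a weakly null $(x_t)_{t\in T}\subset B_X$, and $\delta>0$ with $\|x\|\geqslant\delta$ for every $t\in T$ and every $x\in \text{co}\{x_s:s\preceq t\}$. Fix any $\delta'<\delta$. By Hahn--Banach, pick $z^*_t\in B_{X^*}$ with $z^*_t(x_s)\geqslant\delta$ for all $s\preceq t$. The core claim is that for each $\mu\leqslant\xi$ and each $t\in(\{\varnothing\}\cup T)^\mu$ there exists $y^*_t\in s^\mu_{\delta'}(B_{X^*})$ with $y^*_t(x_s)\geqslant\delta$ for all $s\preceq t$ in $T$. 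The base $\mu=0$ uses $y^*_t:=z^*_t$. At a successor $\mu+1$, weak nullity provides a net of children $u\in T^\mu$ with $u^-=t$ and $x_u\to 0$ weakly; a weak$^*$-cluster point $y^*_t$ of the inductively constructed $y^*_u\in s^\mu_{\delta'}$ stays in the weak$^*$-closed set $s^\mu_{\delta'}$, and the computation $(y^*_u-y^*_t)(x_u)=y^*_u(x_u)-y^*_t(x_u)\geqslant\delta-o(1)>\delta'$ (using $y^*_u(x_u)\geqslant\delta$ from the inductive hypothesis applied to $u$, and $y^*_t(x_u)\to 0$ from weak nullity) shows every weak$^*$-neighborhood of $y^*_t$ contains points of $s^\mu_{\delta'}$ at norm-distance exceeding $\delta'$, placing $y^*_t$ in $s^{\mu+1}_{\delta'}$. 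At a limit $\mu$, the inductive hypothesis furnishes for each $\mu'<\mu$ a functional in $s^{\mu'}_{\delta'}$ satisfying the inequality; a weak$^*$-cluster point lies in each weak$^*$-closed, nested $s^{\mu'}_{\delta'}$, hence in $s^\mu_{\delta'}=\bigcap_{\mu'<\mu}s^{\mu'}_{\delta'}$, and the inequality passes to the weak$^*$-limit. Applied at $t=\varnothing\in(\{\varnothing\}\cup T)^\xi$, this exhibits $y^*_\varnothing\in s^\xi_{\delta'}(B_{X^*})$, contradicting $Sz(X)\leqslant\xi$.

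For the ``if'' direction, assume $Sz(X)>\xi$, so $s^\xi_{2\delta}(B_{X^*})\neq\varnothing$ for some $\delta>0$. The task is to build a rank-$\xi$ tree $T$, weakly null $(x_t)_{t\in T}\subset B_X$, and a witnessing functional $y^*_t$ at each node by top-down transfinite recursion. I would take $T$ to be a canonical rank-$\xi$ skeleton---for example, strictly decreasing finite sequences of ordinals in $[0,\xi)$, augmented at each level with sufficient branching---and set $y^*_\varnothing\in s^\xi_{2\delta}(B_{X^*})$ at the root. At a node $t$ with $y^*_t\in s^{\mu+1}_{2\delta}$, the definition of $s_{2\delta}$ inside any weak$^*$-neighborhood basis of $y^*_t$ yields a net $y^*_{u_\alpha}\in s^\mu_{2\delta}$ with $y^*_{u_\alpha}\to y^*_t$ weak$^*$ and $\|y^*_{u_\alpha}-y^*_t\|>\delta$ (a point at distance $>\delta$ from $y^*_t$ exists in each weak$^*$-neighborhood because the diameter of the intersection exceeds $2\delta$). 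Hahn--Banach supplies $x_{u_\alpha}\in B_X$ with $(y^*_{u_\alpha}-y^*_t)(x_{u_\alpha})>\delta$, and a Goldstine-type diagonal refinement arranges $(x_{u_\alpha})$ to be weakly null in $B_X$ while preserving the separation up to an arbitrarily small loss. Accumulated along a branch, the inequalities give $y^*_u(x_s)\geqslant\delta$ for every ancestor $s$, so $\|x\|\geqslant\delta$ for every $x\in\text{co}\{x_s:s\preceq u\}$.

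The principal obstacle is the bookkeeping at limit ordinals, where Szlenk-derivative membership must survive passage to weak$^*$-cluster points taken across infinitely many derivation levels; the small buffers ($\delta'<\delta$ in the necessity direction, and $2\delta$ versus $\delta$ in the sufficiency direction) exist precisely to absorb strict-versus-non-strict inequalities at those passages. A secondary subtlety in the sufficiency direction is extracting \emph{weakly} null vectors in $B_X$ (not merely weak$^*$-null in $X^{**}$) at each node, for which one approximates $B_{X^{**}}$-witnesses by $B_X$-elements via Goldstine, simultaneously maintaining the weak nullity required by the tree structure.
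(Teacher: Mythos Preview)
The paper does not prove this theorem; it is quoted from \cite{C0} without argument. So there is no in-paper proof to compare against, and your proposal must be judged on its own.

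Your contrapositive argument for the ``only if'' direction is sound. Given the separation hypothesis, Hahn--Banach produces the functionals $z^*_t$, and your transfinite induction on $\mu$ correctly pushes a functional into $s^\mu_{\delta'}(B_{X^*})$ at each $t\in(\{\varnothing\}\cup T)^\mu$: at successors the weak$^*$-cluster point of the children's functionals works (weak nullity of $(x_u)$ forces the diameter estimate), and at limits the nested weak$^*$-compact sets $s^{\mu'}_{\delta'}$ trap the cluster point. Applied at the root this gives $s^\xi_{\delta'}(B_{X^*})\neq\varnothing$.

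The ``if'' direction, however, has a genuine gap. From $y^*_t\in s^{\mu+1}_{2\delta}$ you obtain $y^*_{u_\alpha}\to y^*_t$ weak$^*$ in $s^\mu_{2\delta}$ and norming vectors $x_{u_\alpha}\in B_X$ with $(y^*_{u_\alpha}-y^*_t)(x_{u_\alpha})>\delta$, but there is no mechanism forcing $0$ into the weak closure of $\{x_{u_\alpha}\}$. Your appeal to a ``Goldstine-type diagonal refinement'' does not do this: Goldstine approximates $B_{X^{**}}$-elements by $B_X$-elements in the weak$^*$ topology of $X^{**}$, which is strictly weaker than the weak topology of $X$, so weak$^*$-nullity in $X^{**}$ does not transfer to weak nullity in $X$. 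Moreover, even the preliminary step---finding weak$^*$-null witnesses in $B_{X^{**}}$---is not established: the constraint $(y^*_{u_\alpha}-y^*_t)(x_{u_\alpha})>\delta$ does not by itself push $x_{u_\alpha}$ toward $0$ in any topology. A related casualty is your claim that ``accumulated along a branch, $y^*_u(x_s)\geqslant\delta$ for every ancestor $s$'': from $(y^*_u-y^*_t)(x_u)>\delta$ you cannot conclude $y^*_u(x_u)\geqslant\delta$ without first knowing $y^*_t(x_u)\geqslant 0$, which is exactly what weak nullity (suitably arranged) would provide. The proof in \cite{C0} handles this direction with a more careful construction; your sketch does not yet supply the missing idea.
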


\subsection{Cantor-Bendixson index}

For a topological space $K$ and a subset $L$ of $K$, we let $L'$ denote the set of points in $L$ which are not isolated relative to $L$.  We define \[L^0=L,\] \[L^{\xi+1}=(L^\xi)',\] and if $\xi$ is a limit ordinal \[L^\xi=\bigcap_{\zeta<\xi}L^\zeta.\] We let \[CB(L)=\{\xi\in \textbf{Ord}:L^\xi\neq \varnothing\}.\] We recall that $K$ is said to be \emph{scattered} if every non-empty subset has an isolated point. It is clear that $CB(K)\in\textbf{Ord}$ if and only if $K$ is scattered.  If $K$ is scattered, then $CB(K)=\min\{\xi:K^\xi=\varnothing\}$. We isolate the following easy piece. 

\begin{fact} If $K$ is a compact, Hausdorff space, then $CB(K)$ cannot be a limit ordinal.

\end{fact}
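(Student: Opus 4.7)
The plan is to argue by contradiction, using compactness (in the form of the finite intersection property) to rule out the limit-ordinal case. First I would dispose of the non-scattered case: if $K$ is not scattered, then $CB(K) = \textbf{Ord}$, which is not an ordinal, hence certainly not a limit ordinal. So I may assume $K$ is scattered and $CB(K)$ is an ordinal.

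Now suppose for contradiction that $CB(K) = \lambda$ is a limit ordinal. By minimality of $CB(K)$, $K^\zeta \neq \varnothing$ for every $\zeta < \lambda$, while $K^\lambda = \varnothing$. The key observation I would need is that each iterated derived set $K^\zeta$ is a closed subset of $K$; this follows by a short transfinite induction, since the derived set of any set in a $T_1$-space is closed (the complement of the derived set in $K$ is clearly open: a point is isolated in some relatively open neighborhood), and at limit stages we are intersecting closed sets.

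With closedness in hand, the family $\{K^\zeta : \zeta < \lambda\}$ is a decreasing family of nonempty closed subsets of the compact space $K$, so any finite subfamily has nonempty intersection (namely the term with the largest index). By compactness and the finite intersection property,
\[
K^\lambda \;=\; \bigcap_{\zeta < \lambda} K^\zeta \;\neq\; \varnothing,
\]
contradicting $K^\lambda = \varnothing$. So $CB(K)$ cannot be a limit ordinal.

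There is no real obstacle here; the only thing requiring care is verifying that derived sets are closed (so that compactness applies) and cleanly separating the scattered and non-scattered cases so that the statement is interpreted correctly.
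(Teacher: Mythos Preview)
Your argument is correct and is essentially the same as the paper's: the paper phrases it as $(K\setminus K^\zeta)_{\zeta<CB(K)}$ being an open cover with no finite subcover, which is exactly the complementary formulation of your finite-intersection-property argument. Your version is slightly more detailed (handling the non-scattered case and verifying closedness of the $K^\zeta$), but the idea is identical.
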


\begin{proof} If  $K$ is a compact, Hausdorff topological space such that $CB(K)$ is a limit ordinal, then $(K\setminus K^\zeta)_{\zeta<CB(K)}$ would be an open cover of $K$ with no finite subcover, which is impossible. 
    
\end{proof}

\begin{rem}\upshape It is worth noting that there are a number of topologies of interest with which one can equip a tree.  We use the same prime notation for derivatives of trees and topological spaces, so there would be potential for confusion if we were topologizing trees with topologies with respect to which the derived trees and Cantor-Bendixson derivatives did not coincide, which in general they need not. In this work, however, we will not endow any trees with a topology.  
\end{rem}

\section{The Grasberg norm and upper estimate}

\begin{definition} Let $K$ be an infinite, scattered, compact, Hausdorff space. Then there exists a unique ordinal $\textbf{o}(K)$ such that $CB(K)\in (\omega^{\textbf{o}(K)}, \omega^{\textbf{o}(K)+1})$. Moreover, there exists a unique $\textbf{b}(K)<\omega$ such that $\omega^{\textbf{o}(K)}\textbf{b}(K)<CB(K)<\omega^{\textbf{o}(K)}(\textbf{b}(K)+1)$.  We define the \emph{Grasberg norm} $|\cdot|$ on $C(K)$ by \[|f|= \max\{\|2^n f|_{K^{\omega^\xi n}}\|:0\leqslant n\leqslant \textbf{b}(K)\}.\]  Obviously $\|\cdot\|\leqslant |\cdot|\leqslant 2^{\textbf{b}(K)}\|\cdot\|$.  

\end{definition}

For convenience, we let $C(\varnothing)$ be the zero vector space and let $f|_\varnothing$ be the zero function.

For an infinite, scattered, compact, Hausdorff space $K$, $f\in C(K)$, $\ee>0$, and $0\leqslant n\leqslant \textbf{b}(K)$,  we let \[\Phi_n(f,\ee)=\Bigl\{\varpi \in K^{\omega^{\textbf{\emph{o}}(K)}n}: 2^{n+1}|f(\varpi)|>|f|+\ee\Bigr\}.\] We let \[\Phi(f,\ee)=\bigcup_{n=0}^{\textbf{b}(K)} \Phi_n(f,\ee).\]

The next two lemmas illustrate the usefulness of the Grasberg norm. The first indicates how the Grasberg norm can be used inductively. The second indicates how we can use the Grasberg norm to find $1$-weakly summing sequences.

\begin{lemma} Let $K$ be an infinite, scattered, compact, Hausdorff space. For any $f\in C(K)$ and $\ee>0$, $CB(\Phi(f,\ee))\leqslant \omega^{\textbf{\emph{o}}(K)}$. 

\label{king}
\end{lemma}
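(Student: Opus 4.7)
Writing $\alpha=\textbf{o}(K)$ and $b=\textbf{b}(K)$, the plan is to show that $\Phi(f,\ee)$ is a finite topological disjoint union of the sets $\Phi_0,\ldots,\Phi_b$, each of Cantor--Bendixson index at most $\omega^\alpha$; the bound $CB(\Phi(f,\ee))\leqslant\omega^\alpha$ then follows because CB derivatives distribute across such a sum.

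First I would verify $\Phi_n\subseteq K^{\omega^\alpha n}\setminus K^{\omega^\alpha(n+1)}$ for each $0\leqslant n\leqslant b$. If $n<b$ and $\varpi\in K^{\omega^\alpha(n+1)}$, the Grasberg norm definition gives $2^{n+1}|f(\varpi)|\leqslant \|2^{n+1}f|_{K^{\omega^\alpha(n+1)}}\|\leqslant |f|$, so $\varpi\notin \Phi_n$; for $n=b$, the set $K^{\omega^\alpha(b+1)}$ is empty since $CB(K)<\omega^\alpha(b+1)$. Since $\Phi_n=K^{\omega^\alpha n}\cap\{\varpi:2^{n+1}|f(\varpi)|>|f|+\ee\}$ is relatively open in $K^{\omega^\alpha n}$ and $(K^{\omega^\alpha n})^{\omega^\alpha}=K^{\omega^\alpha(n+1)}$, the open-subset formula for CB derivatives yields $(\Phi_n)^{\omega^\alpha}=K^{\omega^\alpha(n+1)}\cap\Phi_n=\varnothing$, so $CB(\Phi_n)\leqslant\omega^\alpha$.

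The heart of the argument is to show each $\Phi_n$ is clopen in $\Phi(f,\ee)$. Fix $\varpi\in \Phi_n$; for every $m\neq n$ I need a $K$-neighborhood of $\varpi$ disjoint from $\Phi_m$. When $m>n$, $\Phi_m\subseteq K^{\omega^\alpha(n+1)}$ is closed and misses $\varpi$, so any neighborhood of $\varpi$ avoiding $K^{\omega^\alpha(n+1)}$ works. When $m<n$, the Grasberg bound $|f(\varpi)|\leqslant \|f|_{K^{\omega^\alpha n}}\|\leqslant 2^{-n}|f|$, combined with $2^{m+1}\leqslant 2^n$, yields $2^{m+1}|f(\varpi)|\leqslant |f|<|f|+\ee$, and continuity of $f$ extends this strict inequality to a $K$-neighborhood of $\varpi$ which is therefore disjoint from $\Phi_m$. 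Intersecting the finitely many such neighborhoods shows $\Phi_n$ is open in $\Phi(f,\ee)$; applying the same reasoning with every other index in place of $n$ makes $\bigcup_{m\neq n}\Phi_m$ open as well, so $\Phi_n$ is also closed. Since the $\Phi_n$ form a finite clopen partition, $\Phi(f,\ee)^{\omega^\alpha}=\bigsqcup_{n=0}^b(\Phi_n)^{\omega^\alpha}=\varnothing$. The only genuinely delicate step is the Grasberg calculation in the $m<n$ case: the exponential weight $2^n$ built into $|\cdot|$ is precisely what drops $|f(\varpi)|$ strictly below the threshold defining $\Phi_m$, supplying exactly the slack that continuity needs to separate $\varpi$ from the lower pieces of $\Phi(f,\ee)$.
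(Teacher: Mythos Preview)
Your argument is correct and follows the same overall outline as the paper: bound $CB(\Phi_n)\leqslant\omega^{\textbf{o}(K)}$ for each $n$ via the Grasberg inequality (your observation that $\Phi_n\cap K^{\omega^{\textbf{o}(K)}(n+1)}=\varnothing$ is exactly the contradiction the paper derives), and then pass to the finite union. The difference lies in that last step. The paper simply invokes, as a ``standard property,'' the identity $(\bigcup_{n} F_n)^\xi=\bigcup_n F_n^\xi$ for arbitrary subsets $F_n$ and concludes $CB(\bigcup_n F_n)=\max_n CB(F_n)$; you instead prove that the $\Phi_n$ form a clopen partition of $\Phi(f,\ee)$, which legitimately forces the derivative to distribute. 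Your extra work is not redundant: the identity the paper cites fails for general subsets (take $F_0=\{1/n:n\in\nn\}$ and $F_1=\{0\}$ in $\rr$), so some separation property of the $\Phi_n$ is genuinely needed, and your neighborhood argument---particularly the $m<n$ case exploiting $2^n|f(\varpi)|\leqslant |f|$---supplies exactly that. In effect your proof fills in the justification the paper leaves implicit, at the cost of a longer argument.
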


\begin{proof} It is a standard property of the Cantor-Bendixson derivative that for subsets $F_0, \ldots, F_m$ of $K$ and any $\xi$, $\bigl(\bigcup_{n=0}^m F_n\bigr)^\xi=\bigcup_{n=0}^m F_n^\xi$, so $CB\bigl(\bigcup_{n=0}^m F_n\bigr)=\max_{0\leqslant n\leqslant m}CB(F_n)$.  Therefore it is sufficient to show that $CB(\Phi_n(f,\ee))\leqslant \omega^{\textbf{o}(K)}$ for each $0\leqslant n\leqslant \textbf{b}(K)$. To that end, fix $0\leqslant n\leqslant \textbf{b}(K)$.   Seeking a contradiction, assume that \[\varpi\in \Phi_n(f,\ee)^{\omega^{\textbf{o}(K)}}\subset \Phi_n(f,\ee)\cap (K^{\omega^{\textbf{o}(K)}n})^{\omega^{\textbf{o}(K)}}=\Phi_n(f,\ee) \cap K^{\omega^{\textbf{o}(K)}(n+1)}.\]

Then \[|f|\geqslant 2^{n+1}\|f|_{K^{\omega^{\textbf{o}(K)(n+1)}}}\|\geqslant 2^{n+1}|f(\varpi)|>|f|+\ee,\] a contradiction. 
    
\end{proof}

\begin{lemma} Let $K$ be an infinite, scattered, compact, Hausdorff space. Assume that $f,g\in C(K)$ and $\ee>0$ are such that  $\|g|_{\Phi(f,\ee)}\|\leqslant \ee/2^{\textbf{\emph{b}}(K)}$.  Then \[|f+g|\leqslant \max\bigl\{|f|+\ee, |f|/2+\ee/2+|g|\bigr\}.\]   
\label{queen}    
\end{lemma}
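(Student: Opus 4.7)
The plan is to verify the inequality pointwise on each set $K^{\omega^{\textbf{o}(K)}n}$ by a case split, and then take suprema. Fix $0\leqslant n\leqslant \textbf{b}(K)$ and $\varpi\in K^{\omega^{\textbf{o}(K)}n}$. I will bound $2^n|(f+g)(\varpi)|$ by $\max\{|f|+\ee,\, |f|/2+\ee/2+|g|\}$ by splitting on whether or not $\varpi\in \Phi_n(f,\ee)$. Taking the supremum over $\varpi$ (attained by compactness of $K^{\omega^{\textbf{o}(K)}n}$ and continuity of $f+g$) and then the maximum over $n$ will give the lemma.

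Throughout, I will use the trivial bound $2^n|h(\varpi)|\leqslant |h|$, which is immediate from the definition of the Grasberg norm for any $h\in C(K)$, any $\varpi\in K^{\omega^{\textbf{o}(K)}n}$, and any $0\leqslant n\leqslant \textbf{b}(K)$. If $\varpi\in \Phi_n(f,\ee)\subseteq \Phi(f,\ee)$, the hypothesis $\|g|_{\Phi(f,\ee)}\|\leqslant \ee/2^{\textbf{b}(K)}$ yields $|g(\varpi)|\leqslant \ee/2^{\textbf{b}(K)}$, hence $2^n|g(\varpi)|\leqslant 2^n\ee/2^{\textbf{b}(K)}\leqslant \ee$ since $n\leqslant \textbf{b}(K)$; combined with $2^n|f(\varpi)|\leqslant |f|$ this gives $2^n|(f+g)(\varpi)|\leqslant |f|+\ee$. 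If instead $\varpi\notin \Phi_n(f,\ee)$, then by the very definition of $\Phi_n(f,\ee)$ we have $2^{n+1}|f(\varpi)|\leqslant |f|+\ee$, so $2^n|f(\varpi)|\leqslant |f|/2+\ee/2$; pairing this with $2^n|g(\varpi)|\leqslant |g|$ yields $2^n|(f+g)(\varpi)|\leqslant |f|/2+\ee/2+|g|$.

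There is essentially no obstacle; the lemma is a direct unpacking of the definitions, and the set $\Phi(f,\ee)$ is designed so that outside it the bound $2^n|f(\varpi)|\leqslant (|f|+\ee)/2$ is free, while inside it the smallness of $g$ compensates. The only point to keep an eye on is the range $n\leqslant \textbf{b}(K)$, which is what makes $2^n\ee/2^{\textbf{b}(K)}\leqslant \ee$ work in the first case.
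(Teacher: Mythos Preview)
Your proof is correct and follows essentially the same approach as the paper's own proof: fix $n$ and $\varpi$, split into the cases $\varpi\in\Phi_n(f,\ee)$ and $\varpi\notin\Phi_n(f,\ee)$, and bound $2^n|(f+g)(\varpi)|$ in each case exactly as you do. The paper's argument is line-for-line the same, only slightly more terse in omitting the explicit mention of taking the supremum over $\varpi$ and the maximum over $n$.
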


\begin{proof} Fix $0\leqslant n\leqslant \textbf{b}(K)$ and $\varpi\in K^{\omega^{\textbf{o}(K)}n}$.   If $\varpi\in \Phi_n(f,\ee)$, then \[2^n|(f+g)(\varpi)|\leqslant 2^n |f(\varpi)|+2^n |g(\varpi)|\leqslant |f|+2^n\ee/2^{\textbf{\emph{b}}(K)}\leqslant |f|+\ee.\] If $\varpi \in K^{\omega^{\textbf{o}(K)}n}\setminus \Phi_n(f,\ee)$, then \[2^n|(f+g)(\varpi)| \leqslant 2^{n+1}|f(\varpi)|/2 + |g| \leqslant |f|/2+\ee/2+|g|.\]

\end{proof}

\subsection{The upper estimate}

\begin{theorem} For any ordinal $\xi$, if $K$ is a compact, Hausdorff space $K$ with $CB(K)\leqslant \omega^\xi$, then $Sz(C(K))\leqslant \omega^\xi$. 

\end{theorem}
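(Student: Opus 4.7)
My plan is to proceed by transfinite induction on $\xi$, proving in each step that $CB(K)\leqslant \omega^\xi$ forces $Sz(C(K))\leqslant \omega^\xi$. The base case $\xi=0$ is immediate: $CB(K)\leqslant 1$ forces $K$ to be finite, so $C(K)$ is finite dimensional and $Sz(C(K))\leqslant 1=\omega^0$. In the inductive step the degenerate cases collapse. If $\xi$ is a limit, $\omega^\xi$ is a limit ordinal, but $CB(K)$ is always a successor (the second Fact of Section 1), so $CB(K)<\omega^\xi$ and hence $CB(K)\leqslant \omega^\eta$ for some $\eta<\xi$; the inductive hypothesis gives $Sz(C(K))\leqslant \omega^\eta\leqslant \omega^\xi$. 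If $\xi=\eta+1$ is a successor and $CB(K)\leqslant \omega^\eta$, the inductive hypothesis again closes the argument. So the real work is confined to $\xi=\eta+1$ with $\omega^\eta<CB(K)\leqslant\omega^{\eta+1}$, i.e.\ $\textbf{o}(K)=\eta$, in which case the Grasberg norm $|\cdot|$ is defined; set $M=2^{\textbf{b}(K)}$.

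By Theorem \ref{upper} it suffices to show: given a tree $T$ with $\text{rank}(T)=\omega^{\eta+1}=\omega^\eta\cdot \omega$, a weakly null $(f_t)_{t\in T}\subset B_{C(K)}$, and $\delta>0$, there exist $t\in T$ and $f\in \text{co}\{f_s:s\preceq t\}$ with $\|f\|\leqslant \delta$. Since $\|\cdot\|\leqslant |\cdot|$ it is enough to make $|f|\leqslant \delta$. Fix $\ee\in (0,\delta/2)$ and $N\in \nn$ with $2M/N\leqslant \delta/2$. I will build inductively a chain $s_1\prec s_2\prec\cdots \prec s_N$ in $T$ with $s_k\in MAX(T^{\omega^\eta(N-k)})$, and functions $h_k\in \text{co}\{f_s:s_{k-1}\prec s\preceq s_k\}$ (with $s_0=\varnothing$), such that the partial sums $G_k:=h_1+\cdots+h_k$ obey $|G_k|\leqslant 2M+k\ee$. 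Then $f:=G_N/N$ lies in $\text{co}\{f_s:s\preceq s_N\}$ and satisfies $|f|\leqslant 2M/N+\ee\leqslant \delta$, completing the argument.

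For the step $k\to k+1$ (with $k\geqslant 1$), set $L_k=\overline{\Phi(G_k,\ee)}$. The proof of Lemma \ref{king} extends to closures: the strict inequality defining $\Phi_n$ passes through continuity to a non-strict one which, combined with $\varpi\in K^{\omega^\eta(n+1)}$, still yields the same contradiction. Hence $CB(L_k)\leqslant \omega^\eta$ and by the inductive hypothesis $Sz(C(L_k))\leqslant \omega^\eta$. By the tree Fact, $S_k:=\{t\in T:s_k\prec t\}$ has rank $\omega^\eta(N-k)$, and the subtree $\tilde S_k:=S_k^{\omega^\eta(N-k-1)}=S_k\cap T^{\omega^\eta(N-k-1)}$ has rank $\omega^\eta$. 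A direct inspection using the identity $S_k^\beta=S_k\cap T^\beta$ shows that $(f_t)_{t\in \tilde S_k}$ inherits weak nullity from $T$ (in the verification one uses that, for $\zeta<\omega^\eta$ and $t$ nonempty in $\tilde S_k^{\zeta+1}$, the immediate-predecessor sets in $\tilde S_k^\zeta$ and in $T^{\omega^\eta(N-k-1)+\zeta}$ coincide, while for $t=\varnothing$ the resulting set matches the predecessor set of $s_k$ in $T$). Applying Theorem \ref{upper} inside $C(L_k)$ on $\tilde S_k$ yields $t^*\in \tilde S_k$ and $h_{k+1}\in \text{co}\{f_s:s\in \tilde S_k, s\preceq t^*\}$ with $\|h_{k+1}|_{L_k}\|\leqslant \ee/M$. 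Extending $t^*$ up to any $s_{k+1}\in MAX(\tilde S_k)\subset MAX(T^{\omega^\eta(N-k-1)})$ keeps $h_{k+1}\in \text{co}\{f_s:s_k\prec s\preceq s_{k+1}\}$ and restores the depth invariant. Then Lemma \ref{queen}, together with $|h_{k+1}|\leqslant M$, gives $|G_{k+1}|\leqslant \max\{|G_k|+\ee,\,|G_k|/2+\ee/2+M\}\leqslant 2M+(k+1)\ee$. The base step $k=0$ is trivial: pick $s_1\in MAX(T^{\omega^\eta(N-1)})$ (nonempty since $\text{rank}(T)>\omega^\eta(N-1)$) and set $h_1=f_{s_1}$, so $|G_1|=|h_1|\leqslant M\leqslant 2M+\ee$.

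The main obstacle is the ordinal bookkeeping: each step must consume exactly one ``slab'' of length $\omega^\eta$ from the total budget $\omega^\eta\cdot \omega$ while maintaining both the chain structure and the weak-nullity of the sub-family in use. The identity $S_k^\beta=S_k\cap T^\beta$, the containment $MAX(\tilde S_k)\subset MAX(T^{\omega^\eta(N-k-1)})$, and the closure-refinement of Lemma \ref{king} are the three technical points that make the layered picture mesh with the inductive hypothesis applied to the compact set $L_k\subset K$.
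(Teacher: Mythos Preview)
Your proof is correct and follows essentially the same route as the paper: transfinite induction on $\xi$, with the substantive successor step handled via the Grasberg norm, Lemma~\ref{king}, Lemma~\ref{queen}, and the tree characterization of Theorem~\ref{upper}. The only differences are cosmetic refinements --- you take the closure $L_k=\overline{\Phi(G_k,\ee)}$ before invoking the inductive hypothesis (which is arguably more careful, since the theorem assumes compactness), you track an additive invariant $|G_k|\leqslant 2M+k\ee$ in place of the paper's multiplicative bound $(1+\ee)^{m-1}$ on the rescaled sum, and you explicitly maintain $s_k\in MAX(T^{\omega^\eta(N-k)})$ so that the tree Fact applies cleanly at each stage.
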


\begin{proof} We work by induction on $\xi$.   If $\xi=0$, then $CB(K)\leqslant \omega^0=1$ implies that $K$ is finite, and $C(K)$ is finite-dimensional. Then $Sz(C(K))=1$.  Indeed, for any Banach space $X$ and any norm compact $K\subset B_{X^*}$, the norm and weak$^*$-topologies must coincide, so $s_\ee(K)=\varnothing$ for any $\ee>0$. 

Next, assume that for some limit ordinal $\xi$, $CB(K)\leqslant \omega^\xi$.   Since $\omega^\xi$ is a limit ordinal and $CB(K)$ is not, and since $\sup_{\zeta<\xi} \omega^\zeta=\omega^\xi$ because $\xi$ is a limit ordinal, it follows that for some $\zeta<\xi$, $CB(K)\leqslant \omega^\zeta$. Then  $Sz(C(K))\leqslant \omega^\zeta\leqslant \omega^\xi$ by the inductive hypothesis.  

Last, assume that the result holds for $\xi$ and $CB(K)\leqslant \omega^{\xi+1}$.   Again, since $\omega^{\xi+1}$ is a limit ordinal and $CB(K)$ is not, we must have $CB(K)<\omega^{\xi+1}$. If $CB(K)\leqslant \omega^\xi$, then we conclude by the inductive hypothesis that $Sz(C(K))\leqslant \omega^\xi<\omega^{\xi+1}$, so it remains to consider the case that $\omega^\xi<CB(K)<\omega^{\xi+1}$. That is, it remains to consider the case that $\textbf{o}(K)=\xi$.  Fix $\delta>0$ and $n\in\nn$ such that $n \delta>2^{2+\textbf{b}(K)}$.   Fix $\ee>0$ such that $(1+\ee)^n<2$. Fix a tree $T$ with $\text{rank}(T)=\omega^{\xi+1}$ and a weakly null collection $(f_t)_{t\in T}\subset B_{C(K)}$.  Let $S=T\setminus T^{\omega^\xi n}$ and note that $\text{rank}(S)=\omega^\xi n$ and $(f_t)_{t\in S}$ is weakly null.   Let $s_0=\varnothing$ and fix $s_1\in MAX(S^{\omega^\xi(n-1)})$ arbitrary.  Next, assume that for some $m<n$, $\varnothing=s_0\prec \ldots \prec s_m$ have been chosen.  Suppose also that for each $1\leqslant i\leqslant m$, $g_i\in \text{co}\{f_s:s_{i-1}\prec s\preceq s_i\}$ and $|g|\leqslant (1+\ee)^{m-1}$, where $g=\frac{1}{2^{1+\textbf{b}(K)}}\sum_{i=1}^m g_i$.   Let $R=\{t\in S^{\omega^\xi(n-m-1)}:s_m\prec t\}$ and note that $\text{rank}(R)=\omega^\xi$ and $(f_t)_{t\in R}$ is weakly null.    By Lemma \ref{king}, $CB(\Phi(g,\ee))\leqslant \omega^\xi$.  By the inductive hypothesis, $Sz(C(\Phi(g,\ee)))\leqslant \omega^\xi$, from which it follows that there exist $s_{m+1}\in R$ and \[g_{m+1}\in \text{co}\{f_s:s\in R, s\preceq s_{m+1}\}= \text{co}\{f_s:s\in S, s_m\prec s\preceq s_{m+1}\}\] such that $\|g|_{\Phi(g,\ee)}\|<\ee/2^{\textbf{b}(K)}$.    By Lemma \ref{queen},  \begin{align*} \Bigl|\frac{1}{2^{1+\textbf{b}(K)}}\sum_{i=1}^{m+1} g_i\Bigr| & \leqslant \max\{|g|+\ee, |g|/2+\ee/2+|g_{m+1}/2| \\ & \leqslant \max\Bigl\{(1+\ee)^{m-1}+\ee, \frac{(1+\ee)^{m-1}+\ee}{2}+\frac{1}{2}\Bigr\} < (1+\ee)^m.  \end{align*}  This completes the recursive construction.   

Define $f=\frac{1}{n}\sum_{i=1}^n f_i\in \text{co}\{f_s:s\preceq s_n\}$ and note that \[\|f\|\leqslant |f|= \frac{2^{1+\textbf{b}(K)}}{n}\Bigl|\frac{1}{2^{1+\textbf{b}(K)}}\sum_{i=1}^n g_i\Bigr|\leqslant \frac{2^{1+\textbf{b}(K)}(1+\ee)^n}{n} < \frac{2^{2+\textbf{b}(K)}}{n}<\delta.\] Since $\delta>0$ was arbitrary, we are done.

\end{proof}

\section{The lower estimate and main result}

We obtain the easy lower estimates. 

\begin{proposition} Let $K$ be a compact, Hausdorff topological space.  Then $Sz(C(K))\geqslant CB(K)$. 
\label{lower}
\end{proposition}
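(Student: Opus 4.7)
The plan is to exhibit an explicit weak$^*$-compact subset of $B_{C(K)^*}$ whose iterated Szlenk derivatives coincide with the Cantor-Bendixson derivatives of $K$. The natural candidate is the set $L=\{\delta_k:k\in K\}$ of evaluation functionals, which sits inside $B_{C(K)^*}$, is the weak$^*$-continuous image of $K$ under the embedding $k\mapsto \delta_k$, and hence is weak$^*$-compact and weak$^*$-homeomorphic to $K$. The metric input I will need is that $\|\delta_k-\delta_{k'}\|=2$ whenever $k\neq k'$, which follows from Urysohn's lemma in the compact Hausdorff space $K$: pick $f\in C(K)$ with $\|f\|\leqslant 1$, $f(k)=1$, $f(k')=-1$.

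The main step is then to verify, for any fixed $0<\ee<2$, the identification $s_\ee(L)=L'$, where $L'$ denotes the topological derivative of $L$ in its weak$^*$-subspace topology. If $\delta_k$ is weak$^*$-isolated in $L$, a weak$^*$-open set $V$ with $L\cap V=\{\delta_k\}$ gives $\text{diam}(L\cap V)=0$, so $\delta_k\notin s_\ee(L)$. Conversely, if $\delta_k\in L'$, every weak$^*$-neighborhood $V$ of $\delta_k$ contains another $\delta_{k'}$, and the two-point separation forces $\text{diam}(L\cap V)\geqslant 2>\ee$. A routine transfinite induction extends this to $s_\ee^\xi(L)=L^\xi$ for every ordinal $\xi$, and under the homeomorphism $K\to L$ the set $L^\xi$ corresponds to $K^\xi$, so it is nonempty precisely when $\xi<CB(K)$.

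To conclude, I would invoke monotonicity of the Szlenk derivative: if $A\subset B$ are weak$^*$-compact, then $s_\ee^\xi(A)\subset s_\ee^\xi(B)$ for every $\xi$, by an immediate induction using $\text{diam}(A\cap V)\leqslant \text{diam}(B\cap V)$ at the successor step and intersections at limits. Applied with $A=L$ and $B=B_{C(K)^*}$, this yields $s_\ee^\xi(B_{C(K)^*})\supset s_\ee^\xi(L)\neq \varnothing$ for every $\xi<CB(K)$, so $Sz(C(K),\ee)\geqslant CB(K)$, and hence $Sz(C(K))\geqslant CB(K)$; the same chain of inequalities delivers $Sz(C(K))=\textbf{Ord}$ when $K$ is not scattered. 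The only nontrivial point in the argument is the identification $s_\ee(L)=L'$, which in turn reduces to the two-point norm estimate $\|\delta_k-\delta_{k'}\|=2$, so I do not expect any genuine obstacle.
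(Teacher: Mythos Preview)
Your proposal is correct and matches the paper's own proof: both embed $K$ into $B_{C(K)^*}$ via $k\mapsto\delta_k$, use Urysohn to get $\|\delta_k-\delta_{k'}\|=2$ for $k\neq k'$, and then induct to show $\varphi(K^\xi)\subset s_\ee^\xi(L)$ for $0<\ee<2$. The only cosmetic differences are that you prove the equality $s_\ee^\xi(L)=L^\xi$ rather than the bare inclusion, and you make the monotonicity step from $L$ to $B_{C(K)^*}$ explicit, which the paper leaves implicit.
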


\begin{proof} Define $\varphi:K\to D\subset B_{C(K)^*}$ by letting $\varphi(\varpi)=\delta_\varpi$, where $\delta_\varpi$ is the Dirac evaluation functional given by $\delta_\varpi(f)=f(\varpi)$ and $D=\varphi(K)$.  Note that $\varphi$ is a homeomorphism of $K$ with $(D, \text{weak}^*)$.   We claim that for any $\ee\in (0,2)$ and any ordinal $\xi$, \[\varphi(K^\xi) \subset s_\ee^\xi(D).\] The $\xi=0$ case follows by definition and the limit ordinal case follows by taking intersections.  Assume $\varphi(K^\xi)\subset s_\ee^\xi(K)$ and $\varpi\in K^{\xi+1}$.  This means there exists a net $(\varpi_\lambda)\subset K^\xi\setminus \{\varpi\}$ which converges to $\varpi$. Then $(\delta_{\varpi_\lambda})$ is a net in $s^\xi_\ee(K)$ which is weak$^*$-convergent to $\delta_\varpi$.  Since $\|\delta_\varpi-\delta_{\varpi_\lambda}\|=2>\ee$ for all $\lambda$, it follows that $\text{diam}(V\cap K^\xi)=2>\ee$ for any weak$^*$-neighborhood $V$ of $\delta_\varpi$. This yields that $\varphi(\varpi)=\delta_\varpi\in K^{\xi+1}$.

\end{proof}

\begin{proof}[Proof of Theorem \ref{main}]  We know from Proposition \ref{lower} that $Sz(C(K))\geqslant CB(K)$, which means that if $K$ is not scattered, then $Sz(C(K))\geqslant CB(K)=\textbf{Ord}$. On the other hand, if $K$ is scattered, then $CB(K)<\omega^\xi$ for some $\xi$, and $Sz(C(K))\leqslant \omega^\xi<\textbf{Ord}$. Therefore $C(K)$ is Asplund if and only if $K$ is scattered.  

Assume $K$ is scattered.   We know from Proposition \ref{form} that $Sz(C(K))\in \{\omega^\xi:\xi\in \textbf{Ord}\}$, and we know from Proposition \ref{lower} that  $Sz(C(K))\geqslant CB(K)$.  So $Sz(C(K))\geqslant \min\{\omega^\xi:\omega^\xi\geqslant CB(K)\}$.   

But we know from Theorem \ref{upper} that $Sz(C(K))\leqslant \min\{\omega^\xi:\omega^\xi\geqslant CB(K)\}$, so $Sz(C(K))=\min\{\omega^\xi:\omega^\xi\geqslant CB(K)\}$.
    
\end{proof}

\end{document}